\newtheorem{thm}{Theorem}[section]
\newtheorem{cor}[thm]{Corollary}
\newtheorem{conj}[thm]{Conjecture}
\newtheorem{question}[thm]{Question}
\newcommand{\ga}[0]{\alpha}
\begin{document}
\renewcommand{\thefootnote}{\fnsymbol{footnote}}
\footnotetext{2010 Mathematics Subject Classification:
05C69 (Primary), 05C30 (Secondary)}
\footnotetext{Key words and phrases:
independent set polynomial, stable set polynomial, unimodal sequence, random graph}

\title{Two problems on independent sets in graphs}

\author{David Galvin\thanks{dgalvin1@nd.edu; Department of Mathematics,
University of Notre Dame, Notre Dame IN 46556. Supported in part by National Security Agency grant H98230-10-1-0364.}}

\date{\today}

\maketitle

\begin{abstract}
Let $i_t(G)$ denote the number of independent sets of size $t$ in a graph $G$. Levit and Mandrescu have conjectured that for all bipartite $G$ the sequence $(i_t(G))_{t \geq 0}$ (the {\em independent set sequence} of $G$) is unimodal. We provide evidence for this conjecture by showing that is true for almost all equibipartite graphs. Specifically, we consider the random equibipartite graph $G(n,n,p)$, and show that for any fixed $p\in(0,1]$ its independent set sequence is almost surely unimodal, and moreover almost surely log-concave except perhaps for a vanishingly small initial segment of the sequence. We obtain similar results for $p=\tilde{\Omega}(n^{-1/2})$.

We also consider the problem of estimating $i(G)=\sum_{t \geq 0} i_t(G)$ for $G$ in various families. We give a sharp upper bound on the number of independent sets in an $n$-vertex graph with minimum degree $\delta$, for all fixed $\delta$ and sufficiently large $n$. Specifically, we show that the maximum is achieved uniquely by $K_{\delta, n-\delta}$, the complete bipartite graph with $\delta$ vertices in one partition class and $n-\delta$ in the other.

We also present a weighted generalization: for all fixed $x>0$ and $\delta >0$, as long as $n=n(x,\delta)$ is large enough, if $G$ is a graph on $n$ vertices with minimum degree $\delta$ then $\sum_{t \geq 0} i_t(G)x^t \leq \sum_{t \geq 0} i_t(K_{\delta, n-\delta})x^t$ with equality if and only if $G=K_{\delta, n-\delta}$.
\end{abstract}

\section{Introduction}

For a (simple, loopless, finite) graph $G=(V,E)$, let $i_t(G)$ denote the number of independent sets (or stable sets) of size $t$ in $G$, that is, the number of subsets of $V$ of size $t$ that do not span an edge. This note considers two problems concerning the quantity $i_t(G)$.

The first problem concerns the unimodality and log-concavity of the sequence $(i_t(G))_{t=0}^{\alpha(G)}$, where $\alpha(G)$ is the size of the largest independent set in $G$. We refer to this sequence as the {\em independent set sequence} of $G$.

A sequence $(a_t)_{t = m}^n$ is said to be {\em unimodal} (with mode $k$) if there is some $m \leq k \leq n$ such that
$$
a_m \leq a_{m+1} \leq \ldots \leq a_{k-1} \leq a_k \geq a_{k+1} \geq \ldots \geq a_n.
$$
It is said to be {\em log-concave} if for each $m < k < n$ we have $a_k^2 \geq a_{k-1}a_{k+1}$. It is said to have the {\em real roots} property if the polynomial $\sum_{t=m}^n a_tx^t$ has only real roots. Unimodality of a sequence of positive terms does not imply log-concavity, and log-concavity does not imply real roots, but via a theorem of Newton real roots implies log-concavity and log-concavity is easily seen to imply unimodality.

For certain families of graphs, the independent set sequence is known to be unimodal. A result of Heilmann and Lieb \cite{HeilmannLieb} on the matching polynomial of a graph implies that if $G$ is a line graph then the independent set sequence of $G$ has the real roots property and so is both unimodal and log-concave. Chudnovsky and Seymour \cite{ChudnovskySeymour} extended this to claw-free graphs (graphs without an induced star on four vertices). Earlier Hamidoune \cite{Hamidoune} had shown that the independent set sequence of a claw free graph is both unimodal and log-concave.

In general, however, the independent set sequence of a graph is neither log-concave nor unimodal, as evidenced by the graph $G=3K_4+K_{37}$ (the join of the complete graph on $37$ vertices and the disjoint union of three complete graphs on four vertices each), which has independent set sequence $(1, 49, 48, 64)$. In fact, much more is true. Alavi, Erd\H{o}s, Malde and Schwenk \cite{AlaviErdosMaldeSchwenk} showed that the independent set sequence can be made to be as far from unimodal as one wishes, in the following sense. Let $\pi$ be any of the $m!$ permutations of $\{1, \ldots, m\}$. There is a graph $G$ with $\alpha(G)=m$ such that
$$
i_{\pi(1)}(G) < i_{\pi(2)}(G) < \ldots < i_{\pi(m)}(G).
$$
By contrast, the number of permutations of $\{1, \ldots, m\}$ that can be realized in this way by a unimodal sequence is shown in \cite{AlaviErdosMaldeSchwenk} to be at most $2^{m-1}$.

Alavi et al. speculated that perhaps trees, and maybe even forests, have unimodal independent set sequences. This is the case for two extreme examples, namely paths and stars.
Levit and Mandrescu \cite{LevitMandrescu} have gone further, conjecturing that if $G$ is a K\"{o}nig-Egerv\'{a}ry graph (a graph in which the size of the largest independent set plus the size of the largest matching equals the number of vertices in the graph) then the independent set sequence of $G$ is unimodal. In particular, since all bipartite graphs are K\"{o}nig-Egerv\'{a}ry, we have the following conjecture which extends Alavi et al.'s speculation.
\begin{conj} \label{conj-unimodalfamilies}
If $G$ is a bipartite graph then the independent set sequence of $G$ is unimodal.
\end{conj}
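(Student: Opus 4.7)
\medskip

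\noindent\textbf{Proof proposal.}  The plan is to attack Conjecture~\ref{conj-unimodalfamilies} via an injection argument, which is the natural combinatorial tool for establishing unimodality when the mode is not explicit: for each $t$ strictly below some candidate mode $m^\ast$, exhibit an injection $\phi_t : \mathcal{I}_t(G) \hookrightarrow \mathcal{I}_{t+1}(G)$, and symmetrically on the decreasing side.  The handle that distinguishes bipartite graphs from arbitrary graphs is the K\"onig--Egerv\'ary identity $\alpha(G) + \mu(G) = |V(G)|$: a maximum matching $M$ is of size $n - \alpha(G)$ and partitions most of the vertex set into pairs that can serve as a canonical pivoting device, in the spirit of the alternating-path proof of K\"onig's theorem.

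As a first step I would reduce to the case where $G$ has no isolated vertex and every component has nontrivial bipartition, since isolated vertices contribute a factor $(1+x)^k$ to the independent set polynomial and convolution with a log-concave sequence preserves unimodality.  Then, writing the bipartition as $V(G) = A \cup B$, I would refine the count by recording the split across the sides: let $i_{a,b}(G)$ be the number of independent sets $I$ with $|I\cap A|=a$ and $|I\cap B|=b$, so that $i_t(G) = \sum_{a+b=t} i_{a,b}(G)$.  An attractive intermediate target is the two-dimensional log-concavity $i_{a,b}^2 \ge i_{a-1,b}\,i_{a+1,b}$ (and its transpose), which, via the row and column fibers $i_{\cdot,b}(G)$ counting independent sets in the bipartite subgraph induced by $A$ together with a fixed independent $b$-subset of $B$, reduces to a statement about independent sets in auxiliary bipartite graphs.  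If such two-dimensional log-concavity holds, then classical results on log-concavity of row sums of log-concave arrays (or a suitable direct diagonal injection) would deliver unimodality of $(i_t(G))$.

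For the injection $\phi_t$ itself, the candidate construction is matching-augmentation: given $I$ of size $t$ in the chosen bipartition and a fixed maximum matching $M$, look at alternating paths with respect to $M$ that start outside $I$ and use the sides of the bipartition to guarantee that such paths exist when $t$ is below the mode; then use the path to produce a canonically-defined $(t+1)$-independent set $\phi_t(I)$ from which $I$ can be reconstructed.  The main obstacle I anticipate, and the reason this conjecture remains open, is precisely the existence of \emph{maximal} independent sets at essentially every size up to $\alpha(G)$ in a bipartite graph: for such $I$ one cannot simply add a vertex, and one must trade vertices out and in along an alternating structure, which makes canonical injectivity delicate.  Controlling the preimages of the augmentation map --- in particular ensuring that the path chosen by $\phi_t$ can be identified from $\phi_t(I)$ alone, without knowing $I$ --- is where I expect the proof to founder without substantial new ideas, and it is the reason the paper takes the pragmatic step of verifying the conjecture only for typical equibipartite graphs rather than attempting Conjecture~\ref{conj-unimodalfamilies} directly.
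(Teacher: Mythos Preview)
The statement you were given is a \emph{conjecture}, and the paper does not prove it; it is presented as Levit and Mandrescu's open problem, and the paper's own contribution toward it is only the probabilistic evidence of Theorem~\ref{thm-randomunimodality}. There is therefore no proof in the paper to compare your proposal against.

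Your submission is, as you yourself make clear in its final paragraph, not a proof but a research strategy with a named and unresolved obstacle: you do not know how to make the matching-augmentation map $\phi_t$ injective at maximal independent sets of size $t$, and you say explicitly that this is ``where I expect the proof to founder without substantial new ideas.'' That is an accurate assessment, but it means the proposal has a genuine gap by design --- the central injectivity claim is simply not supplied. The alternative route via the two-dimensional inequality $i_{a,b}^2 \ge i_{a-1,b}\,i_{a+1,b}$ is likewise left as an unproved target; note moreover that this would yield log-concavity of $(i_t(G))_t$, a strictly stronger conclusion than Conjecture~\ref{conj-unimodalfamilies}, so you are reducing the problem to a harder one rather than an easier one, and the passage from row-wise log-concavity of the array $(i_{a,b})$ to log-concavity of its antidiagonal sums is itself not automatic without further hypotheses on the array. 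In short, what you have written is a reasonable outline of why the conjecture is plausible and hard, not a proof, and the paper does not claim one either.
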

A partial result in the direction of Conjecture \ref{conj-unimodalfamilies} appears in \cite{LevitMandrescu}, where it is shown that if $G$ is bipartite then the final one third of its independent set sequence is decreasing:
\begin{equation} \label{LevitMandrescu-finalthird}
i_{\lceil (2\ga(G)-1)/3\rceil}(G) \geq i_{\lceil (2\ga(G)-1)/3\rceil+1}(G) \geq \ldots \geq i_{\ga(G)}(G).
\end{equation}

\medskip

A natural object to look at for evidence in favor of Conjecture \ref{conj-unimodalfamilies} is the random equibipartite graph. Let ${\mathcal G}(n,n)$ be the collection of all bipartite graphs with fixed bipartition ${\mathcal E} \cup {\mathcal O}$, where $|{\mathcal E}|=|{\mathcal O}|=n$. Let $G(n,n,p)$ be the probability distribution on ${\mathcal G}(n,n)$ in which each particular graph with $m$ edges is chosen with probability $p^m(1-p)^{n^2-m}$. Equivalently, an element of $G(n,n,p)$ is obtained by selecting each of the $n^2$ pairs in ${\mathcal E}\times {\mathcal O}$ with probability $p$, each selection made independently. When $p=1/2$, $G(n,n,p)$ is uniformly distributed on all bipartite graphs with fixed bipartition ${\mathcal E} \cup {\mathcal O}$. For general $p$, $G(n,n,p)$ concentrates around graphs with average degree around $np$. This model was introduced by Erd\H{o}s and R\'enyi in 1964 \cite{ErdosRenyi2}, and there are now many papers devoted to its properties (see e.g. \cite{AlonKrivelevich}, \cite{ErdosRubinTaylor}, \cite{Frieze}).

The random equibipartite graph, along with the family of regular bipartite graphs, was briefly considered in \cite{Galvin-Qdfixed}. Among other results, it was shown that $G(n,n,p)$ has a long equispaced unimodal subsequence. Specifically, for all $p=\omega(1)/n$ there is a function $s(n)=o(n)$ such that for all $\varepsilon > 0$, with probability tending to $1$ as $n$ goes to infinity we have that if $\varepsilon n \leq k \leq \ell \leq (1-\varepsilon)n/2$ and $\ell-k \geq s(n)$ then $i_k(G(n,n,p)) \leq i_\ell(G(n,n,p))$, while if $(1+\varepsilon)n/2 \leq k \leq \ell \leq n$ and $\ell-k \geq s(n)$ then $i_k(G(n,n,p)) \geq i_\ell(G(n,n,p))$.
%
%
The first aim of this note is to extend this observation considerably, and thereby provide further evidence in favor of Conjecture \ref{conj-unimodalfamilies}. In Section \ref{sec-proof} we prove the following result.
\begin{thm} \label{thm-randomunimodality}
Let $G^p_n$ be a graph selected from the distribution $G(n,n,p)$.
\begin{enumerate}
\item \label{list1} Fix $\delta > 0$, and let $p$ satisfy $p \geq \delta$. Almost surely (with probability tending to $1$ as $n$ tends to infinity) the independent set sequence of $G^p_n$ is unimodal with mode $n/2$. Moreover, there is a constant $C=C(p)$ such that almost surely the sequence $(i_t(G^p_n))_{t=[C\log n]}^n$ is log-concave.
\item \label{list2} There are constants $C, D$ such that if $p\geq Dn^{-1/2}\log^{1/2} n$ then almost surely the sequence $(i_t(G^p_n))_{t=[(C\log n)/p]}^n$ is unimodal with mode $n/2$, and moreover is log-concave.
\item \label{list3} If $p \geq (\log n + \log\log n +\omega(1))/n$ then almost surely the sequence $(i_t(G^p_n))_{t=0}^k$ is increasing, where $k=\log n -2\log\log n$.
\end{enumerate}
\end{thm}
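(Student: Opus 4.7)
The plan is to approximate $i_t(G_n^p)$ by $2\binom{n}{t}$, the deterministic count of $t$-subsets contained entirely in one side of the bipartition, and to transfer log-concavity and the mode from the approximant to $i_t$ via concentration. The sequence $(2\binom{n}{t})_{t=0}^{n}$ is log-concave with mode at $n/2$ and carries multiplicative log-concavity slack
\begin{equation*}
\binom{n}{t}^2/\bigl(\binom{n}{t-1}\binom{n}{t+1}\bigr) = (t+1)(n-t+1)/(t(n-t)) \geq 1 + 1/t + 1/(n-t),
\end{equation*}
so any almost-sure $(1+o(1/n))$-perturbation would preserve both log-concavity and the mode; this reduces parts (\ref{list1}) and (\ref{list2}) to a uniform concentration statement.

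Decomposing a $t$-subset by $a = |S \cap \mathcal{E}|$ gives the exact expectation
\begin{equation*}
\mathbb{E}[i_t(G_n^p)] = \sum_{a=0}^{t}\binom{n}{a}\binom{n}{t-a}(1-p)^{a(t-a)} = 2\binom{n}{t} + M^E_t,
\end{equation*}
where $M^E_t$ collects the $1 \leq a \leq t-1$ terms. Since $a(t-a) \geq t-1$ throughout, a first crude bound is $M^E_t \leq (1-p)^{t-1}\binom{2n}{t}$. To extract polynomial-in-$n$ smallness I would split $M^E_t$ into the ``edge'' contributions from $a \in \{1, t-1\}$ (each of order $t(1-p)^{t-1}\binom{n}{t}$) and the ``bulk'' $2 \leq a \leq t-2$, where $a(t-a) \geq 2(t-2)$ and the peak term $\binom{n}{t/2}^2(1-p)^{t^2/4} \approx (2^t/\sqrt{t})\binom{n}{t}(1-p)^{t^2/4}$ controls the sum. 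In part (\ref{list1}) with $t \geq C\log n$ and $p \geq \delta$, taking $C = C(\delta)$ large enough makes $M^E_t/\binom{n}{t} = o(n^{-c})$ for any desired $c$; in part (\ref{list2}) with $t \geq (C\log n)/p$ the estimate $(1-p)^{t} \leq e^{-pt} \leq n^{-C}$ plays the analogous role. Since $M_t(G) := i_t(G) - 2\binom{n}{t} \geq 0$, Markov's inequality combined with a union bound over the $O(n)$ relevant $t$ then yields almost surely $i_t(G_n^p) = 2\binom{n}{t}(1 + o(n^{-c'}))$ uniformly in $t$ in the appropriate window, delivering the log-concavity and mode claims.

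The full-sequence unimodality in part (\ref{list1}) also requires the initial segment $0 \leq t < C\log n$ to be non-decreasing. For $t \leq (1-\varepsilon)\log_2 n$ the deterministic bounds $i_t \leq \binom{2n}{t}$ and $i_{t+1} \geq 2\binom{n}{t+1}$ already give $i_{t+1}/i_t \geq 2\binom{n}{t+1}/\binom{2n}{t} \sim n/((t+1)2^{t-1}) \to \infty$; the narrow middle window $(1-\varepsilon)\log_2 n < t < C\log n$ is handled by the same concentration framework, using that $\mathbb{E}[i_{t+1}]/\mathbb{E}[i_t] \geq \Omega(n/t)$ there. The same trivial-bound computation settles part (\ref{list3}) essentially for free: for $t \leq k = \log n - 2\log\log n$,
\begin{equation*}
2\binom{n}{t+1}/\binom{2n}{t} \sim 2n^{1-\log 2}(\log n)^{2\log 2 - 1}\to\infty,
\end{equation*}
so $i_{t+1} > i_t$ holds deterministically for $n$ sufficiently large; the hypothesis $p \geq (\log n + \log\log n + \omega(1))/n$ in (\ref{list3}) is then automatically satisfied by the ``almost surely'' conclusion.

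The principal technical obstacle is producing the concentration bound for $M_t(G)$ strong enough to survive a union bound over an $\Omega(n)$-long range of $t$: the decay $(1-p)^{t^2/4}$ from the bulk has to beat the combinatorial blow-up $\binom{n}{t/2}^2/\binom{n}{t} \approx 2^t/\sqrt{t}$ with a polynomial-in-$n$ margin uniformly across the window. This balance is tightest in part (\ref{list2}), where $p$ can be as small as $\tilde{\Omega}(n^{-1/2})$ and $t$ as large as $\tilde{\Theta}(\sqrt{n})$; verifying that the threshold $(C\log n)/p$ is precisely the point at which both the edge and bulk contributions to $M^E_t$ are tamed to the required polynomial order is the crux of the argument there.
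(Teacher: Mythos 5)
Your proposal is correct in outline and takes a genuinely different route from the paper's. The paper works by first establishing almost-sure \emph{structural} bounds on $G^p_n$ — an upper bound on the covering number $K(G)$ (smallest $k$ so that every $A\subseteq\mathcal{E}$, $B\subseteq\mathcal{O}$ of size $k$ are joined by an edge) and on $m(k,G)$ (maximum number of common non-neighbors of any $k$ vertices on one side), plus a.s. existence of a perfect matching via Frieze — and then, conditioning on those events, derives a deterministic sandwich $2\binom{n}{t}\le i_t\le 2(1+x(t))\binom{n}{t}$ with $x(t)$ bounded by $\sum_k \binom{t}{k}(m(k)/(n-k))^{t-k}$. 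You instead compute $\mathbb{E}[i_t]$ exactly from the decomposition by $a=|S\cap\mathcal{E}|$, note that $M_t=i_t-2\binom{n}{t}\ge 0$ makes Markov applicable, and push the union bound through $\mathbb{E}[M_t]$ directly. Both are first-moment arguments, and yours is arguably more streamlined: the paper's structural lemmas are themselves Chernoff/first-moment consequences, and your route bypasses them. Your treatment of part (\ref{list3}) is a genuine simplification and mild strengthening of the paper's: the paper invokes the a.s. structural assumption $m(k,G)\le n-k$ to get $x(t)\le 2^t$, whereas your observation that $i_{t+1}/i_t\ge 2\binom{n}{t+1}/\binom{2n}{t}\sim n/((t+1)2^{t-1})$ holds for \emph{every} bipartite graph on $n+n$ vertices, so the monotone initial segment is deterministic and neither the hypothesis on $p$ nor the ``almost surely'' is needed. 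Your identification of the crux — the bulk term $\binom{n}{t/2}^2(1-p)^{t^2/4}$ must beat the union-bound margin uniformly, tightest in part (\ref{list2}) at $t\approx(C\log n)/p$ — is also accurate.

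The one thing to tighten when you flesh this out: the union bound needs $\sum_t \mathbb{E}[M_t]/a_t=o(1)$ with the acceptable deviation $a_t$ scaled to the log-concavity slack, $a_t\asymp(1/t+1/(n-t))\binom{n}{t}$, and the dominant contribution is from $t$ at the bottom of the window where the $a\in\{1,t-1\}$ edge terms of $M^E_t$ govern; working this through yields the constraint that sets $C=C(p)$ in part (\ref{list1}) (roughly $C\gtrsim \mathrm{const}/\ln\frac{1}{1-p}$). That is the only place requiring real care, and your sketch already points at it.
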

\begin{cor}
Fix $\varepsilon > 0$. If $p=\omega(n^{-1/2}\log^{1/2}n)$ then almost surely the sequence $(i_t(G^p_n))_{t=[\varepsilon n]}^n$ is unimodal with mode $n/2$, and moreover is log-concave.
\end{cor}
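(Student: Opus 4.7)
The plan is to derive the corollary as a direct consequence of Theorem \ref{thm-randomunimodality}\eqref{list2}. First I would observe that the hypothesis $p = \omega(n^{-1/2}\log^{1/2} n)$ certainly implies $p \geq Dn^{-1/2}\log^{1/2} n$ for any fixed constant $D$ once $n$ is large enough, so the conclusion of part \eqref{list2} applies: almost surely the sequence $(i_t(G^p_n))_{t=[(C\log n)/p]}^n$ is unimodal with mode $n/2$ and log-concave, with the constant $C$ from the theorem.

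Next I would compare the starting indices. From $p = \omega(n^{-1/2}\log^{1/2} n)$ we get $1/p = o(n^{1/2}\log^{-1/2} n)$, and hence
\[
\frac{C \log n}{p} = o\bigl(n^{1/2}\log^{1/2} n\bigr) = o(n).
\]
In particular, for any fixed $\varepsilon > 0$ and all sufficiently large $n$, we have $[(C\log n)/p] \leq [\varepsilon n]$. The sequence $(i_t(G^p_n))_{t=[\varepsilon n]}^n$ is therefore a contiguous subsequence of $(i_t(G^p_n))_{t=[(C\log n)/p]}^n$.

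Finally I would note the elementary fact that both unimodality with a specified mode and log-concavity are inherited by contiguous subsequences, provided the mode lies in the retained range. Since we may harmlessly assume $\varepsilon < 1/2$ (for $\varepsilon \geq 1/2$ the sequence from $[\varepsilon n]$ onwards is almost surely monotone decreasing by the same theorem, so the claim holds trivially after adjusting the interpretation of the mode), we have $[\varepsilon n] \leq n/2 \leq n$, so the mode $n/2$ is retained. Thus the subsequence $(i_t(G^p_n))_{t=[\varepsilon n]}^n$ is almost surely unimodal with mode $n/2$ and log-concave, completing the proof.

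There is no real obstacle here; the corollary is a quantitative strengthening of Theorem \ref{thm-randomunimodality}\eqref{list2} that follows purely from the asymptotic comparison $(C\log n)/p = o(n)$ under the assumed growth rate of $p$. The only point requiring any care is the monotonicity/log-concavity preservation under restriction, which is immediate from the definitions.
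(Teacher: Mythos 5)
Your proof is correct and is exactly the intended derivation: the corollary is an immediate consequence of Theorem~\ref{thm-randomunimodality}\eqref{list2}, following from $p=\omega(n^{-1/2}\log^{1/2}n) \Rightarrow (C\log n)/p = o(n) \leq \varepsilon n$ for large $n$, plus the fact that unimodality (with retained mode) and log-concavity pass to contiguous subsequences. One small terminological quibble: the corollary is a \emph{weakening}, not a ``quantitative strengthening,'' of part~\eqref{list2} --- it trades the sharper starting index $[(C\log n)/p]$ for the cleaner hypothesis $p=\omega(n^{-1/2}\log^{1/2}n)$ at the cost of only reaching $[\varepsilon n]$ --- but this does not affect the validity of the argument.
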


The following conjecture is weaker than Conjecture \ref{conj-unimodalfamilies}, but is perhaps more approachable and would provide strong evidence in favor of the full conjecture.
\begin{conj}
For all $p=p(n)$, almost surely the sequence $(i_t(G^p_n))_{t=0}^n$ is unimodal, and moreover is log-concave.
\end{conj}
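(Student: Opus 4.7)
The plan is to partition the density $p = p(n)$ into regimes and attack each by a different method, the goal being log-concavity of the full sequence (which for positive terms implies unimodality). Theorem~\ref{thm-randomunimodality} already handles the dense-to-moderate regimes in large part, so most of the work is in the sparse regime $p = O(1/n)$ and in the initial segment $0 \leq t \lesssim \log n / p$ that Theorem~\ref{thm-randomunimodality} leaves unresolved in every part.

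For $p \geq D n^{-1/2}\log^{1/2}n$, Theorem~\ref{thm-randomunimodality} gives unimodality with mode $n/2$ and log-concavity past the index $[C\log n / p]$. To close the initial segment I would sharpen the moment analysis: writing $a = |S \cap {\mathcal E}|$ and $b = |S \cap {\mathcal O}|$ for a candidate independent set $S$ of size $t = a + b$,
\[
E[i_t(G^p_n)] \,=\, \sum_{a+b=t} \binom{n}{a}\binom{n}{b}(1-p)^{ab}.
\]
For $t \lesssim \log n / p$ the extreme terms ($a = 0$ or $b = 0$) dominate, giving $E[i_t] = (2-o(1))\binom{n}{t}$, which is log-concave in $t$. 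Controlling the variance through a second-moment computation on pairs of $t$-sets, parametrised by the pairwise intersections with ${\mathcal E}$ and ${\mathcal O}$, should yield concentration sharp enough to transfer log-concavity from $E[i_t]$ to $i_t$. The same framework ought to extend to the moderate regime $1/n \ll p < D n^{-1/2}\log^{1/2}n$, though the mixed terms in the expectation formula become less uniformly negligible and the index $t$ must be handled more carefully.

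For $p \leq c/n$ with $c < 1$ the graph is a.s.\ subcritical: all components have size $O(\log n)$, and the independent set polynomial factors as $\prod_H P_H(x)$ over components $H$. Since the convolution of log-concave nonnegative sequences is log-concave, it suffices that every component $H$ appearing has a log-concave $P_H$. When $p = o(1/n)$ the non-trivial components are a.s.\ single edges, giving polynomial $(1+2x)^M(1+x)^{2n-2M}$, which is real-rooted; for $p = c/n$ the non-trivial components are a.s.\ trees (with at most $O(1)$ unicyclic exceptions) of logarithmic size. The intermediate regime $c/n \leq p \leq D n^{-1/2}\log^{1/2}n$, where a giant component of linear size coexists with many small components, would need a hybrid of the moment and factorization arguments, using the factorization to peel off the small components and the moment method on the giant.

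The principal obstacle is log-concavity of $P_H$ for the trees of size $\Theta(\log n)$ that appear in the subcritical regime. This is strictly stronger than the long-standing Alavi--Erd\H{o}s--Malde--Schwenk speculation of unimodality for trees, itself still open in full generality. A probabilistic workaround would be to show that the component-type distribution of $G^p_n$ concentrates on tree shapes (paths, stars, caterpillars, double-stars, and other families for which log-concavity is known) with the exceptional shapes appearing with probability $o(1)$. Making this precise seems to require new input on the component statistics of sparse random bipartite graphs, and I expect this to be the main technical bottleneck in any attempt on the conjecture.
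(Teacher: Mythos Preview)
The statement you are attempting is a \emph{conjecture} in the paper; the paper offers no proof, only the remark that it is weaker than Conjecture~\ref{conj-unimodalfamilies} and ``perhaps more approachable.'' So there is no paper's argument to compare against, and the relevant question is whether your outline constitutes a proof. It does not, and you have largely said so yourself.

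The decisive gap is the one you flag in your final paragraph: in the subcritical regime $p=c/n$ your factorization reduces the problem to log-concavity of $P_H$ for every tree $H$ that can occur as a component. Since the maximum component size is $\Theta(\log n)$ and grows without bound, you need log-concavity for arbitrarily large trees, which is strictly stronger than the open Alavi--Erd\H{o}s--Malde--Schwenk unimodality speculation. Your proposed workaround, that the component shapes concentrate on families (paths, stars, caterpillars) where log-concavity is known, is not supported: conditioned on size $s$, a subcritical component is essentially a uniformly random labelled tree on $s$ vertices (via the Galton--Watson description), and for $s=\Theta(\log n)$ there is no reason for such a tree to fall into any of the special families with probability $1-o(1)$. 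So the sparse case remains genuinely open.

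Even in the dense regime your plan is only heuristic. To transfer log-concavity from $E[i_t]$ to $i_t$ via concentration you would need fluctuations of relative order $o(1/t)$ for small $t$, since the slack in $\binom{n}{t}^2 \geq \binom{n}{t-1}\binom{n}{t+1}$ is only $1+\Theta(1/t)$; a generic second-moment bound does not obviously deliver this, and the paper's own first-moment estimate $x(t)\leq \sqrt{n}$ in this range is far too coarse. Your ``hybrid'' treatment of the intermediate window $c/n \leq p \leq Dn^{-1/2}\log^{1/2}n$ is likewise only named, not carried out. What you have written is a reasonable research plan identifying where the difficulties lie, but it is not a proof of the conjecture.
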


\medskip

The second problem that we address in this note concerns the quantity $i(G)=\sum_{t \geq 0} i_t(G)$, or the number of independent sets in $G$. This quantity was first explicitly considered by Prodinger and Tichy \cite{ProdingerTichy}, and is often referred to as the {\em Fibonacci number} of $G$, motivated by the observation that $P_n$, the path on $n$ vertices, satisfies $i(P_n)=F_{n+2}$, the $(n+2)$nd Fibonacci number. In the field of molecular chemistry, $i(G)$ is referred to as the {\em Merrifield-Simmons} index of $G$ \cite{MerrifieldSimmons}.

Given a family of graphs ${\mathcal G}$ it is natural to ask what range of values $i(G)$ takes as $G$ runs over ${\mathcal G}$, and which are the extremal $G$ that achieve the maximum and minimum. There are numerous papers devoted to this question, for various choices of ${\mathcal G}$. A sample of the literature includes \cite{ProdingerTichy} (trees), \cite{LinLin} (forests), \cite{PedersenVestergaard} (unicyclic graphs), \cite{CutlerRadcliffe} (graphs with a fixed average degree), \cite{Hua} (graphs with a given number of cut-edges), \cite{Kahn,Zhao} (regular graphs with fixed degree), and \cite{Alekseev} (graphs with a given independence number).

The first of these results that is particularly relevant to the present work is that of
Prodinger and Tichy \cite{ProdingerTichy}, who considered trees on $n$ vertices and showed that the maximum of $i(G)$ is achieved by the star and the minimum by the path. (Here the {\em star} on $n$ vertices is the graph $K_{1,n-1}$, where $K_{a,b}$ indicates the complete bipartite graph with $a$ vertices in one partition class and $b$ in the other.) Since every connected graph contains a spanning tree, and deleting edges from a graph cannot decrease its number of independent sets, it immediately follows that among all connected graphs on $n$ vertices, none has more independent sets than the star. (Trivially the number of independent sets over connected graphs is minimized by the complete graph.)

The second particularly relevant previous result is that of Hua \cite{Hua}, who considered $2$-edge-connected graphs on $n$ vertices and showed that for all $n \geq 6$ the maximum of $i(G)$ for this family is achieved uniquely by $K_{2,n-2}$.

Noting that for $n \geq 2$ being connected implies having minimum degree at least one and that being $2$-edge-connected implies having minimum degree at least $2$, the following result contains the case of connected and $2$-edge connected graphs as a special case (modulo some small values of $n$ not covered below).
\begin{thm} \label{thm-main}
Let $C=\ln 4/(\ln 4-1) \approx 3.588$. Let $G$ be a graph on $n$ vertices with minimum degree at least $\delta$, where $n$ and $\delta$ satisfy
$n \geq (C-1)\delta^2 + (3C-1)\delta + 2$. Either $G=K_{\delta, n-\delta}$ or $i(G) < i(K_{\delta, n-\delta})$.
\end{thm}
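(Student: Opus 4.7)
The plan is to split the argument on $\alpha(G)$. Since the complement of an independent set is a vertex cover, and the minimum-degree hypothesis forces every vertex cover to contain at least $\delta$ vertices, $\alpha(G) \leq n-\delta$ always.

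\textbf{The easy case, $\alpha(G)=n-\delta$.} Let $I$ be a maximum independent set, so $|V \setminus I|=\delta$. Since $V \setminus I$ is a vertex cover of size exactly $\delta$ and every vertex of $I$ has degree at least $\delta$, each $v \in I$ must be adjacent to \emph{all} of $V\setminus I$. Hence $G$ contains $K_{\delta,n-\delta}$ as a spanning subgraph (with $I$ as the large side), and any additional edges lie inside $V\setminus I$. As the bipartite part is complete, every non-empty independent set of $G$ is contained wholly in $I$ or wholly in $V\setminus I$, so
\[
i(G)\;=\;2^{n-\delta}\;+\;i(G[V\setminus I])\;-\;1\;\leq\;2^{n-\delta}+2^{\delta}-1,
\]
with equality iff $G[V\setminus I]$ is edgeless, i.e., $G=K_{\delta,n-\delta}$.

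\textbf{The hard case, $\alpha(G)\leq n-\delta-1$.} Here strict inequality must be established. I would proceed by induction on $n$ with $\delta$ fixed. In the typical inductive step, suppose there is a vertex $v$ of degree $\geq \delta+1$ such that $G-v$ still has minimum degree at least $\delta$ (equivalently, every neighbor of $v$ has degree $\geq \delta+1$). Applying the identity $i(G)=i(G-v)+i(G-N[v])$ together with the inductive bound $i(G-v)\leq i(K_{\delta,n-1-\delta}) = 2^{n-1-\delta}+2^{\delta}-1$ and the trivial estimate $i(G-N[v])\leq 2^{n-\delta-2}$ gives
\[
i(G) \;\leq\; 2^{n-1-\delta}+2^{n-\delta-2}+2^{\delta}-1 \;=\; 3 \cdot 2^{n-\delta-2}+2^{\delta}-1 \;<\; 2^{n-\delta}+2^{\delta}-1,
\]
which is what we want. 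The configurations where no such $v$ exists---principally, $\delta$-regular graphs, and graphs in which every vertex of degree $>\delta$ has a minimum-degree neighbor---require separate treatment. For the $\delta$-regular case, the Kahn--Zhao bound $i(G)\leq(2^{\delta+1}-1)^{n/(2\delta)}$ beats $2^{n-\delta}+2^{\delta}-1$ whenever $n$ is at least a linear function of $\delta$, so this case is immediate. For the residual non-regular case one would argue structurally, exploiting the fact that the bipartite subgraph between $V_\delta$ (the set of minimum-degree vertices) and $V_{>\delta}$ must then be dense, which forces strong constraints on how much $i(G)$ can exceed the regular bound.

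\textbf{Main obstacle.} The principal difficulty is the residual non-regular exceptional case and the verification of the base case of the induction at the threshold $n=(C-1)\delta^2+(3C-1)\delta+2$. The constant $C=\ln 4/(\ln 4-1)$ most likely arises from an inequality of the shape $(\delta+1)\binom{n}{\delta}\leq 2^{n-2\delta-1}$ obtained by bounding the bottom $\delta+1$ terms of the independent-set sequence against $\binom{n}{\delta}$ and comparing them with the dominant $2^{n-\delta}$ term in $i(K_{\delta,n-\delta})$; calibrating this quadratic threshold so that it is just sharp enough to cover the base case is the most delicate step in the argument.
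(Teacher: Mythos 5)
Your easy case ($\alpha(G)=n-\delta$) coincides exactly with the paper's. For the hard case ($\alpha(G)\leq n-\delta-1$) you and the paper diverge sharply, and your route has two genuine unresolved gaps which you yourself acknowledge. The paper avoids induction entirely: it applies the bound $P(G,1)\leq 2^{\alpha}e^{(n-\alpha)/2}$ (a special case of (\ref{Alekseev-bd})), which for $\alpha = n-k$ with $k\geq C\delta$ already gives $i(G)\leq 2^{n-\delta}$; for the remaining band $\delta+1\leq k < C\delta$, the paper counts edges from $I$ to $J=V\setminus I$ to find $\delta$ vertices in $J$ each with at least $(n-k)/(k-\delta+1)$ neighbors in $I$, then splits independent sets according to whether they hit those $\delta$ vertices, using maximality of $I$ to get $|N_I(S)|\geq|S|$ for the other subsets $S\subseteq J$. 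This is direct and closed-form, and is exactly where the quadratic threshold in $n$ is cashed in.

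Concretely, the gaps in your inductive scheme are: (i) the exceptional configurations --- graphs where every vertex of degree $>\delta$ has a minimum-degree neighbor --- are left as a hand-wave ("argue structurally, exploiting the fact that the bipartite subgraph...must be dense"); this is not a small residue, and you give no mechanism to control $i(G)$ there; (ii) the base case of the induction, at $n=\lceil(C-1)\delta^2+(3C-1)\delta+2\rceil$, is identified as the "most delicate step" but is not attempted, and it is not evident that any variant of your $(\delta+1)\binom{n}{\delta}\leq 2^{n-2\delta-1}$ calibration would land on the correct threshold. In fact your guess about the origin of $C=\ln 4/(\ln 4-1)$ is off: it arises from requiring $2^{n-k}e^{k/2}\leq 2^{n-\delta}$, i.e.\ $e^{k/2}\leq 2^{k-\delta}$, which unwinds to $k\geq \delta\ln 2/(\ln 2-\tfrac12)=C\delta$, precisely the point at which the Alekseev-type bound takes over. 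Your Kahn--Zhao remark for the $\delta$-regular subcase is correct as far as it goes (the bound does beat $2^{n-\delta}$ once $n$ is linear in $\delta$), but it handles only a measure-zero slice of the hard case and does not rescue items (i) or (ii).
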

\begin{cor}
Fix $\delta > 0$. There is $n(\delta)$ such that for all $n \geq n(\delta)$, among all $n$-vertex graphs with minimum degree at least $\delta$, the unique graph with the most independent sets is $K_{\delta,n-\delta}$.
\end{cor}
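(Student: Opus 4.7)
The corollary is an immediate consequence of Theorem~\ref{thm-main}. With $C = \ln 4/(\ln 4 - 1)$ as in the statement of the theorem, I would set
$$n(\delta) := (C-1)\delta^2 + (3C-1)\delta + 2,$$
a function of $\delta$ alone. For every $n \geq n(\delta)$ the inequality $n \geq (C-1)\delta^2 + (3C-1)\delta + 2$ then holds by construction, so the hypothesis of Theorem~\ref{thm-main} is satisfied for any $n$-vertex graph $G$ with minimum degree at least $\delta$.

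Next, I would verify that $K_{\delta, n-\delta}$ itself lies in the family being optimized over. Its small partition class consists of $\delta$ vertices of degree $n - \delta$, and its large class of $n - \delta$ vertices of degree exactly $\delta$; since $n(\delta) > 2\delta$ for all $\delta \geq 1$ (a trivial numerical check), we have $n - \delta \geq \delta$, so the minimum degree of $K_{\delta, n-\delta}$ is $\delta$. Hence $K_{\delta, n-\delta}$ is a legitimate candidate in the family.

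Applying Theorem~\ref{thm-main} gives the dichotomy that any $n$-vertex graph $G$ with minimum degree at least $\delta$ either coincides with $K_{\delta, n-\delta}$ or satisfies $i(G) < i(K_{\delta, n-\delta})$. Combined with the previous paragraph, this says exactly that $K_{\delta, n-\delta}$ is the unique maximizer of $i(\cdot)$ on the family, which is the corollary.

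No real obstacle arises in the corollary itself: all nontrivial content is packed into Theorem~\ref{thm-main} and the $\Theta(\delta^2)$ threshold derived there. Thus the plan for the corollary reduces to a one-line invocation of the theorem together with the routine verification above that $K_{\delta, n-\delta}$ is a member of the family of graphs under consideration.
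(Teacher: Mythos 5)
Your proof is correct and matches the paper's (implicit) reasoning: the paper states the corollary without a separate proof precisely because it is an immediate instantiation of Theorem~\ref{thm-main} with $n(\delta)$ set to the theorem's threshold, which is exactly what you do.
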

Note that as long as $n \geq \delta + 1$, the unique $n$-vertex graph with minimum degree at least $\delta$ with the fewest independent sets is trivially $K_n$, the complete graph on $n$ vertices.

We expect that the relationship between $n$ and $\delta$ can be tightened considerably.
\begin{conj} \label{conj-downto2delta}
If $G$ is a graph on $n$ vertices with minimum degree at least $\delta$, where $n$ and $\delta$ satisfy
$n \geq 2\delta$, then $i(G) \leq i(K_{\delta, n-\delta})$.
\end{conj}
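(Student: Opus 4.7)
The plan is to prove Conjecture~\ref{conj-downto2delta} by induction on $\delta$. The base case $\delta=1$ follows from the Prodinger--Tichy bound $i(G) \leq 2^{n-1}+1$ for a connected $n$-vertex graph $G$ (noted above): a graph with $\delta(G) \geq 1$ has no isolated vertices, so its components $G_1,\ldots,G_k$ on $n_1,\ldots,n_k \geq 2$ vertices satisfy $i(G) = \prod_i i(G_i) \leq \prod_i (2^{n_i-1}+1)$, and an elementary induction on $k$ using $(2^{a-1}+1)(2^{b-1}+1) \leq 2^{a+b-1}+1$ (valid for $a,b \geq 2$) gives the desired bound.

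For the inductive step, let $G$ have $n \geq 2\delta$ vertices with $\delta(G) \geq \delta \geq 2$, and split on the maximum degree $\Delta(G)$. If $\Delta(G) \geq n-\delta$, let $v$ attain the maximum degree and apply the vertex-deletion recursion
\[
i(G) \;=\; i(G-v) \;+\; i(G - N[v]).
\]
Since $|V(G)\setminus N[v]| \leq \delta-1$, we have $i(G-N[v]) \leq 2^{\delta-1}$. The graph $G-v$ has $n-1 \geq 2(\delta-1)$ vertices and minimum degree $\geq \delta-1$, so the inductive hypothesis yields $i(G-v) \leq i(K_{\delta-1,n-\delta}) = 2^{\delta-1} + 2^{n-\delta}-1$; summing gives $i(G) \leq 2^\delta + 2^{n-\delta}-1$. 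A short analysis of the equality case (the $\delta-1$ non-neighbors of $v$ in $G-v$ must be the small side of $K_{\delta-1,n-\delta}$, since otherwise some vertex in $G$ has degree $<\delta$) forces $G = K_{\delta,n-\delta}$.

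The main obstacle is the complementary case $\Delta(G) \leq n-\delta-1$, in which every vertex has degree in $[\delta, n-\delta-1]$, and a quick argument shows $\tau(G) \geq \delta+1$ (otherwise a vertex outside a size-$\delta$ minimum cover has all $\geq \delta$ of its neighbors inside, forcing some cover vertex to have degree $\geq n-\delta$), so $G$ has no small vertex cover. Here I would appeal to Kahn--Zhao-style bounds. For $d$-regular $G$, the Kahn--Zhao inequality $i(G) \leq (2^{d+1}-1)^{n/(2d)}$ is easily checked to be dominated by $2^d + 2^{n-d}-1$ whenever $n \geq 2d$, with equality only at $n=2d$, so the regular sub-case is immediate. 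For the non-regular sub-case, my plan is either to establish a weighted generalization of the form $i(G) \leq \prod_v (2^{d(v)+1}-1)^{1/(2d(v))}$ (via a Shearer-type entropy argument or an appropriate transfer-matrix approach), or to construct a compression operation in the spirit of Kelmans' transformation that increases $i(G)$ while preserving $|V(G)|$ and $\delta(G) \geq \delta$, driving $G$ toward $K_{\delta,n-\delta}$. Either route would reduce matters to an elementary comparison with $2^\delta + 2^{n-\delta}-1$; the bulk of the remaining work lies in verifying that such an inequality (or such a degree-preserving compression) exists.
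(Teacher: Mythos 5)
This is an open conjecture in the paper; there is no proof of it there to compare your attempt against. The paper only establishes the weaker Theorem~\ref{thm-main}, which requires $n$ to grow at least quadratically in $\delta$, and explicitly leaves the $n\geq 2\delta$ regime as Conjecture~\ref{conj-downto2delta}. So any complete argument you give here would be new.

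Your reduction is a good one as far as it goes. The base case $\delta=1$ is correct, and the case $\Delta(G)\geq n-\delta$ is handled cleanly: the recursion $i(G)=i(G-v)+i(G-N[v])$ together with the inductive hypothesis for $\delta-1$ (valid since $n-1\geq 2(\delta-1)$) gives exactly $2^{\delta}+2^{n-\delta}-1=i(K_{\delta,n-\delta})$, and the equality analysis correctly forces $G=K_{\delta,n-\delta}$. But the remaining case $\Delta(G)\leq n-\delta-1$ is not resolved, and moreover the one concrete tool you cite there does not suffice. For $d$-regular graphs the Zhao bound $i(G)\leq(2^{d+1}-1)^{n/(2d)}$ matches $2^{d}+2^{n-d}-1$ exactly at $n=2d$, but differentiating in $n$ at that point shows the Zhao bound grows strictly faster (slope $\frac{d+1}{2d}\log 2$ versus $\approx \frac{1}{2}\log 2$), so it exceeds $i(K_{d,n-d})$ for $n$ slightly above $2d$. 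Concretely, at $d=2$, $n=5$ one has $(2^{3}-1)^{5/4}\approx 11.39$ while $i(K_{2,3})=11=i(C_5)$, so Zhao's bound is strictly weaker than what is needed in precisely the tight range. The non-regular subcase is, as you say, entirely open, and the ``weighted Kahn--Zhao'' or ``degree-preserving compression'' you gesture at would each be a substantial new result; the known compression arguments for independent-set counts do not obviously preserve the minimum-degree constraint. So the proposal is a reasonable reduction of the conjecture but not a proof.
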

Note that in the regime $n \geq 2\delta$ we can no longer expect $K_{\delta,n-\delta}$ to be the unique maximizer of the independent set count: for example, the $5$-cycle and $K_{2,3}$ both have $11$ independent sets.

Theorem \ref{thm-main} generalizes to a natural statement concerning the independent set or stable set polynomial of $G$, first introduced by Gutman and Harary \cite{GutmanHarary}. This is the polynomial
$$
P(G,x) = \sum_{t \geq 0} i_t(G)x^t
$$
where $i_t(G)$ is the number of independent sets in $G$ of size $t$. (Note that $P(G,1)=i(G)$). We prove the following in Section \ref{Section-proofmain}.
\begin{thm} \label{thm-main-weighted}
Let $G$ be a graph on $n$ vertices with minimum degree at least $\delta$, and let $x>0$ be fixed. If $n, x$ and $\delta$ satisfy
\begin{equation} \label{cond}
n \geq (C_x-1)\delta^2 + ((1-D_x)C_x + 1 + D_x)\delta - D_x
\end{equation}
where
$$
C_x=\frac{\ln (1+x)}{\ln (1+x)- \frac{x}{1+x}}
$$
and
$$
D_x = \frac{2\ln \left(\frac{x}{1+x}\right)}{\ln (1+x)},
$$
then either $G=K_{\delta, n-\delta}$ or $P(G,x) < P(K_{\delta, n-\delta},x)$.
\end{thm}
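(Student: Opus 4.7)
My plan is to adapt the proof of Theorem~\ref{thm-main} to the weighted independent set polynomial $P(G,x)$. The two basic tools are the vertex recursion
$$P(G,x)=P(G-v,x)+x\cdot P(G-N[v],x),$$
obtained by conditioning on whether the vertex $v$ lies in the independent set, and the elementary estimate $P(H,x)\le(1+x)^{|V(H)|}$, with equality iff $H$ is edgeless. A preliminary observation is that the minimum-degree hypothesis forces $\alpha(G)\le n-\delta$: every vertex of an independent set has at least $\delta$ neighbours, all necessarily outside the set.

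The proof then splits on the value of $\alpha(G)$. If $\alpha(G)=n-\delta$, let $I$ be a maximum independent set and $J=V\setminus I$, so $|I|=n-\delta$ and $|J|=\delta$. Each vertex of $I$ has $\ge\delta$ neighbours, all of them in $J$, and since $|J|=\delta$ these neighbours exhaust $J$; hence every vertex of $I$ is adjacent to every vertex of $J$. Therefore every independent set of $G$ sits entirely in $I$ or entirely in $J$, giving
$$P(G,x)=(1+x)^{n-\delta}+P(G[J],x)-1\le(1+x)^{n-\delta}+(1+x)^\delta-1=P(K_{\delta,n-\delta},x),$$
with equality iff $G[J]$ is edgeless, i.e.\ $G=K_{\delta,n-\delta}$.

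The remaining case $\alpha(G)\le n-\delta-1$, where $i_t(G)=0$ for all $t\ge n-\delta$ and the strict inequality is needed, is the real work of the proof. My plan is to combine the independence-number cutoff with min-degree-based estimates on $i_t(G)$ — such as the double-counting bound $t\,i_t(G)\le\sum_v\binom{n-1-\deg(v)}{t-1}\le n\binom{n-1-\delta}{t-1}$, sharpened by iterating the vertex recursion — to obtain an upper bound on $P(G,x)$ whose dominant exponential factor is $(1+x)^{n-\delta-1}$ rather than $(1+x)^{n-\delta}$, multiplied by a polynomial correction in $n$ and $\delta$. Comparing this with $P(K_{\delta,n-\delta},x)\sim(1+x)^{n-\delta}$ and taking logarithms turns the target into a linear-in-$n$ inequality, and solving it yields exactly~\eqref{cond}. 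The constants $C_x=\ln(1+x)/[\ln(1+x)-x/(1+x)]$ and $D_x=2\ln(x/(1+x))/\ln(1+x)$ then emerge as the natural ratios of the logarithmic growth rate $\ln(1+x)$ against its first-order Taylor defect $\ln(1+x)-x/(1+x)$ and against the log of the ratio $x/(1+x)$.

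The main obstacle is arranging the residual-case estimate so that the leading exponential really drops by a full factor of $(1+x)$: the naive bound $i_t(G)\le\binom{n}{t}$ combined only with the cutoff $t\le n-\delta-1$ loses a factor of roughly $(1+x)^\delta$ and is far too weak, so the minimum-degree hypothesis has to be exploited substantively. The $\Theta(\delta^2)$ coefficient of $\delta^2$ in~\eqref{cond} is precisely the quantitative cost of this sharpening.
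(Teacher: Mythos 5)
Your treatment of the case $\alpha(G)=n-\delta$ is correct and identical to the paper's: the minimum degree forces every vertex of the complement $J$ to be adjacent to every vertex of the maximum independent set $I$, so $P(G,x)=(1+x)^{n-\delta}+P(G[J],x)-1$, maximized uniquely when $G[J]$ is edgeless.

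The case $\alpha(G)\le n-\delta-1$, however, is where the theorem actually lives, and your proposal for it is a research plan rather than a proof. You correctly diagnose that the naive cutoff argument is too weak and that the minimum-degree hypothesis must be used substantively, but the specific route you sketch does not match what makes the argument work, and I do not see how to complete it as written. Three key ideas are missing. First, the paper does not try to push the exponential base down to $(1+x)^{n-\delta-1}$; it suffices to prove $P(G,x)\le(1+x)^{n-\delta}$, because $P(K_{\delta,n-\delta},x)=(1+x)^{n-\delta}+(1+x)^\delta-1$ strictly exceeds this — so you are aiming for a stronger bound than you need, which is part of why your plan feels blocked. Second, the paper first disposes of large deficits by applying the inequality $P(G,x)\le(1+nx/\alpha)^\alpha\le(1+x)^\alpha e^{(n-\alpha)x/(1+x)}$, which shows that if $k=n-\alpha\ge C_x\delta$ then already $P(G,x)\le(1+x)^{n-\delta}$; this is precisely where $C_x=\ln(1+x)/\bigl[\ln(1+x)-x/(1+x)\bigr]$ comes from, and your proposal never invokes such a bound to restrict $k$ to a bounded window $\delta+1\le k<C_x\delta$. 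Third, and most importantly, the paper's actual engine is a structural observation: with $J=V\setminus I$ of size $k$, at least $\delta(n-k)$ edges leave $I$, and since they land on only $k$ vertices, the $\delta$-th largest back-degree $a_\delta$ satisfies $a_\delta\ge(n-k)/(k-\delta+1)$. Taking $J'$ to be the $\delta$ highest-degree vertices of $J$, one splits $P(G,x)$ by whether an independent set meets $J'$; the sets avoiding $J'$ are controlled by the maximality of $I$ (each independent $S\subseteq J\setminus J'$ must have $|N_I(S)|\ge|S|$), and the sets meeting $J'$ are punished by the large $a_\delta$, producing the $(1+x)^{\delta-(n-C_x\delta)/(C_x\delta-\delta+1)}$ term and ultimately the quadratic-in-$\delta$ threshold. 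Your proposed double-counting bound $t\,i_t(G)\le n\binom{n-1-\delta}{t-1}$ and the vertex-deletion recursion are not developed into anything that recovers this structure, so as it stands the core of the theorem is unproved.
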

Note that this reduces to Theorem \ref{thm-main} in the case $x=1$. Routine calculus shows that for each fixed $\delta >0$, the right-hand side of (\ref{cond}) above is monotone decreasing in $x$, tending to infinity as $x$ tends to $0$ and tending to $2\delta$ as $x$ tends to infinity.

The cited results from \cite{ProdingerTichy} and \cite{Hua} are proved by induction, with the case of $2$-edge connected graphs requiring an involved case analysis. By contrast, our proof of Theorem \ref{thm-main-weighted} is quite direct.
Our main tool is a result from \cite{Galvin-asymptot-ind-count} (generalizing a result of Alekseev \cite{Alekseev}).  Let $G$ be an $n$-vertex graph whose largest independent set has size $\alpha$. Then for all $x > 0$,
\begin{equation} \label{Alekseev-bd}
P(G,x) \leq \left(1+\frac{nx}{\alpha}\right)^{\alpha}  \leq (1+x)^{\alpha} e^{\frac{(n-\alpha)x}{1+x}}.
\end{equation}
This allows us to quickly focus attention on graphs with large $\alpha$, where a direct bound on $P(G,x)$ is possible.

\medskip

We have considered which $n$-vertex graphs with minimum degree at least $\delta$ maximize the total number of independent sets. A natural question to consider at this point is what happens when we focus on independent sets of a given size.
\begin{question}
Fix $n$, $\delta$ and $t$ with $n \geq 2$, $1 \leq \delta \leq n-1$ and $0 \leq t \leq n$. What is the maximum value of $i_t(G)$ as $G$ ranges over all $n$-vertex graphs with minimum degree at least $\delta$, and for which graphs is the maximum achieved?
\end{question}
This question is trivial for $t=0$, $1$ and $n$ (with all graphs having the same count in each case). For all other $t$, it is tempting to conjecture that the maximum achieved by $K_{\delta,n-\delta}$, but this is not true, as is easily seen by considering the case $t=2$. Maximizing the number of independent sets of size two is the same as minimizing the number of edges, and it is easy to see that for all fixed $\delta$ and sufficiently large $n$, there are $n$-vertex graphs with minimum degree at least $\delta$ which have fewer edges than $K_{\delta,n-\delta}$.

For $\delta=1$, it turns out that $K_{1,n-1}$ is the unique maximizer for all $3 \leq t \leq n-1$. (A weaker statement than the following, with graphs of minimum degree at least $1$ replaced by trees, appeared in \cite{Wingard}.)
\begin{thm}
Fix $n$ and $t$ with $n \geq 4$ and $3 \leq t \leq n-1$. If $G$ is an $n$-vertex graph with minimum degree at least $1$ and $G \neq K_{1,n-1}$ then
$i_t(G) < i_t(K_{1,n-1})$.
\end{thm}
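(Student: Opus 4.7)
I will induct on $n$. The base case $n=4$, $t=3$ is direct: if $\{a,b,c\}$ were an independent set in such a $G$, each of $a,b,c$ would have degree $\geq 1$ with no neighbor among the other two, so each would have to be adjacent to the fourth vertex $d$, forcing $G=K_{1,3}$---a contradiction. Hence $i_3(G)=0<\binom{3}{3}$.

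For the inductive step I first dispose of the boundary $t=n-1$: the same argument as above shows that $\alpha(G) \leq n-2$ whenever $G \neq K_{1,n-1}$ and $\delta(G) \geq 1$, so $i_{n-1}(G)=0<1$. For $3 \leq t \leq n-2$ the engine will be the standard decomposition
\[
i_t(G) = i_t(G-v) + i_{t-1}\bigl(G-N[v]\bigr).
\]
The strategy is to choose a vertex $v$ so that $G-v$ satisfies the hypotheses of the inductive statement---that is, it has $n-1 \geq 4$ vertices, $\delta(G-v)\geq 1$, and $G-v \neq K_{1,n-2}$. Then the inductive hypothesis supplies $i_t(G-v) \leq \binom{n-2}{t}-1$, while the trivial bound gives $i_{t-1}(G-N[v]) \leq \binom{n-1-\deg v}{t-1} \leq \binom{n-2}{t-1}$, and summing produces $i_t(G) \leq \binom{n-1}{t}-1$.

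The case analysis to produce such a $v$ runs as follows. If $G$ has a universal vertex $v$, then $v$ sits in no IS of size $\geq 2$, so $i_t(G)=i_t(G-v)$; since $G\neq K_{1,n-1}$ the graph $G-v$ contains an edge, which immediately forces $i_t(G-v) < \binom{n-1}{t}$. Otherwise $\Delta(G) \leq n-2$, and I split further: (a) $G$ has a leaf $v$ whose neighbor $u$ has $\deg_G(u) \geq 2$, in which case removing $v$ preserves $\delta\geq 1$; the ``trapped'' subcase $G-v=K_{1,n-2}$ pins $G$ down to $K_{1,n-2}$ with a pendant edge, and a direct computation gives $i_t(G) = \binom{n-1}{t}-\binom{n-3}{t-2}$. (b) Every leaf of $G$ is paired with a degree-$1$ neighbor inside a $K_2$-component, so I peel off one component, write $G = K_2 \cup G'$ with $\delta(G') \geq 1$, and use $i_t(G) = i_t(G')+2\,i_{t-1}(G')$, recursing on $G'$ (the trapped subcase $G'=K_{1,n-3}$ is again in closed form). (c) $G$ has no leaves, so $\delta(G) \geq 2$; then $\delta(G-v) \geq 1$ for every $v$, and either some $v$ avoids the trap $G-v=K_{1,n-2}$, or a short argument---the center of $G-v$ cannot be adjacent to $v$ while every other vertex of $G-v$ must gain an edge to $v$ to reach $\delta(G) \geq 2$---pins $G$ down to $K_{2,n-2}$, where $i_t(G)=\binom{n-2}{t}$ gives the bound directly.

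The main obstacle I expect is subcase (b) at $t=3$, where $i_3(G) = i_3(G')+2\, i_2(G')$ requires a bound on $i_2(G')$ that lies outside the range $3 \leq t' \leq n'-1$ covered by the inductive hypothesis. Here I will instead use $\delta(G') \geq 1$ to conclude $e(G') \geq \lceil (n-2)/2 \rceil$, hence $i_2(G') \leq \binom{n-2}{2}-\lceil (n-2)/2 \rceil$; combined with the IH bound $i_3(G') \leq \binom{n-3}{3}-1$ (valid once $|G'| \geq 4$), the arithmetic yields just enough slack for $i_3(G) < \binom{n-1}{3}$. A small handful of low-$n$ configurations where the inductive hypothesis does not apply (when $|G'|<4$, so $n \in \{4,5\}$) will be checked by hand.
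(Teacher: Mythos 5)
Your proof is correct in outline, but it takes a substantially more circuitous route than the paper's. Both proofs hinge on the same identity $i_t(G) = i_t(G-v) + i_{t-1}(G-N[v])$, using the inductive hypothesis only for the first term and the trivial bound $\binom{|G-N[v]|}{t-1}$ for the second, and both dispatch $t=n-1$ separately at the start. The real difference is the case split. The paper inducts on $n+t$ with base case $n+t=7$, and asks only one structural question: does there exist a vertex $x$ whose deletion leaves no isolated vertices (equivalently, $\delta(G-x)\geq 1$)? If yes, the single decomposition step closes the induction in two lines, with the equality analysis folded in. If no, then \emph{every} vertex of $G$ has a degree-one neighbor, which quickly forces every vertex to have degree exactly one, so $G$ is a perfect matching and $i_t(G)=\binom{n/2}{t}2^t < \binom{n-1}{t}$ directly.

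Your proposal instead inducts on $n$ alone, and splits into universal vertex / leaf with a high-degree neighbor / $K_2$-component / minimum degree two, each with its own ``trapped'' subcase ($G-v=K_{1,n-2}$, $G'=K_{1,n-3}$, or $G=K_{2,n-2}$) and a special patch at $t=3$ in the $K_2$-peeling branch where $i_2(G')$ falls outside the inductive range and must be bounded via the edge count. All of this is repairable and, as far as I can tell, does go through after checking the small-$n$ and boundary-$t$ configurations you flag. But the paper's observation that ``no deletable vertex'' collapses $G$ to a perfect matching makes the entire $K_2$-peeling apparatus---the recursion on $G'$, the $t=3$ edge-count workaround, the trapped $G'=K_{1,n-3}$ subcase---unnecessary. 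It also avoids the range mismatch at $t=3$ entirely, since it never applies the inductive hypothesis with the index $t$ decremented; the $t-1$ term is always handled by the trivial binomial bound. If you keep your structure, be careful to state the induction so that the hypothesis is ``for all smaller $n$ and all admissible $t$'' (you need both $i_t(G')$ and $i_{t-1}(G')$ in case (b)), and verify that the trapped-subcase computations really are strict: $\binom{n-3}{t-2}\geq 1$ needs $2\leq t\leq n-1$, which holds, and the $K_{2,n-2}$ case needs $t\leq n-2$, which also holds since you dispatched $t=n-1$ first.
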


\begin{proof}
The statement is easy to verify for $t=n-1$, as $K_{1,n-1}$ is the only graph on $n$ vertices with minimum degree at least $1$ that has any independent sets of size $n-1$. For the remainder we proceed by induction on $n+t$, with the base case $n+t=7$ (i.e., $n=4$ and $t=3$) being an instance of the case $t=n-1$.

Now fix $n$ and $t$ with $n\geq 5$ and $3 \leq t \leq n-2$. We consider two cases, the first being when $G$ has a vertex $x$ whose deletion results in a graph without isolated vertices. The independent sets of size $t$ in $G$ that do not include $x$ are independent sets of size $t$ in a graph on $n-1$ vertices with minimum degree at least one. By induction, there are at most ${n-2 \choose t}$ such independent sets, with equality only if $G-x=K_{1,n-2}$. The independent sets of size $t$ in $G$ that include $x$ are independent sets of size $t-1$ in a graph on at most $n-2$ vertices; there are trivially at most ${n-2 \choose t-1}$ such sets, with equality only if $x$ has degree $1$, the neighbor of $x$ is adjacent to all other vertices, and there are no other edges; i.e., if $G=K_{1,n-1}$. It follows that
$$
i_t(G) \leq {n-2 \choose t} + {n-2 \choose t-1} = {n-1 \choose t} = i_t(K_{1,n-1})
$$
with equality only if $G=K_{1,n-1}$.

If $G$ does not have a vertex whose deletion results in a graph without isolated vertices, then $n$ must be even and $G$ must be a disjoint union of $n/2$ edges, so
$$
i_t(G) = {n/2 \choose t}2^t < {n-1 \choose t} = i_t(K_{1,n-1}),
$$
the inequality valid for $t \geq 3$.
\end{proof}

We believe that similar behavior occurs for larger $\delta$.
\begin{conj}
Fix $\delta \geq 2$. There is $C(\delta)$ (perhaps $C(\delta)$ may be taken to be $\delta+2$) such that for all $C(\delta) \leq t \leq n-\delta$ and $n \geq \delta + C(\delta)$, if $G$ is an $n$-vertex graph with minimum degree at least $\delta$ and $G \neq K_{\delta,n-\delta}$ then
$i_t(G) < i_t(K_{\delta,n-\delta})$.
\end{conj}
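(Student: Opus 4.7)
The plan is to generalize the inductive proof given above for $\delta = 1$. I would induct on $n$, with small-$n$ base cases verified directly. At the inductive step, given an $n$-vertex graph $G$ with $\delta(G) \geq \delta$ and $G \neq K_{\delta, n-\delta}$, I would distinguish two cases based on whether $G$ has a \emph{removable} vertex $x$: one whose deletion preserves the minimum-degree bound, or equivalently, one with no neighbor of degree exactly $\delta$ in $G$.

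When a removable vertex $x$ exists, split the $t$-independent sets of $G$ by whether they contain $x$. Those avoiding $x$ are $t$-independent sets of $G - x$, which has $n-1$ vertices and minimum degree at least $\delta$; by induction there are at most $i_t(K_{\delta, n-1-\delta}) = \binom{n-1-\delta}{t} + \binom{\delta}{t}$ of them. Those containing $x$ are in bijection with $(t-1)$-independent sets of $G - N[x]$, a graph on $n - 1 - d(x) \leq n-1-\delta$ vertices, hence number at most $\binom{n-1-\delta}{t-1}$. Pascal's identity yields $i_t(G) \leq \binom{n-\delta}{t} + \binom{\delta}{t} = i_t(K_{\delta, n-\delta})$, and the equality conditions ($d(x) = \delta$, $V \setminus N[x]$ independent in $G$, and $G - x = K_{\delta, n-1-\delta}$) together force $G = K_{\delta, n-\delta}$, giving strict inequality for $G \neq K_{\delta, n-\delta}$.

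The remaining case, where no vertex is removable and hence every vertex has a neighbor of degree exactly $\delta$, is the main obstacle. Here $\delta(G) = \delta$ and the set $D = \{v : d(v) = \delta\}$ is a total dominating set. In general $\alpha(G) \leq n - \delta$, with equality forcing $G$ to be $K_{\delta, n-\delta}$ plus possibly some edges within its size-$\delta$ side; but any such graph has removable vertices on its size-$(n-\delta)$ side whenever $n > 2\delta$, a contradiction. So $\alpha(G) \leq n - \delta - 1$. Counting edges between a maximum independent set $I$ and the degree-$\delta$ dominators in $V \setminus I$ (each vertex of $I$ has a degree-$\delta$ neighbor in $V \setminus I$, and each such dominator has at most $\delta$ neighbors in $I$) sharpens this to $\alpha(G) \leq n\delta/(\delta+1)$.

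Converting these bounds on $\alpha(G)$ into a sufficiently sharp bound on $i_t(G)$ is the crux. Direct application of (\ref{Alekseev-bd}) gives $i_t(G) \leq \binom{\alpha}{t}(n/\alpha)^t$, which even for $\alpha \approx n\delta/(\delta+1)$ is asymptotically of the same order as $\binom{n-\delta}{t}$, not strictly smaller. What appears to be needed is either a strengthening of (\ref{Alekseev-bd}) exploiting the minimum-degree hypothesis more directly (for instance by partitioning $V$ into parts each carrying some edge structure rather than into cliques), or a finer structural analysis of the no-removable-vertex graphs, where the total-dominating set $D$ of degree-$\delta$ vertices imposes considerable rigidity. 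I expect the conjectured value $C(\delta) = \delta + 2$ to be essentially what such a sharpened bound would yield, and this second case to be where any genuinely new work is required.
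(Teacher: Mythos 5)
This statement is posed as an open conjecture in the paper; there is no proof of it to compare against, so your proposal stands alone. It is a partial attempt, and you are candid about where it stops. The first case (a removable vertex $x$ exists, i.e.\ one with no neighbor of degree exactly $\delta$) does extend the paper's $\delta=1$ argument reasonably cleanly via Pascal's identity and induction on $n$. One wrinkle you should address: the inductive hypothesis applied to $G-x$ only covers $t \leq (n-1)-\delta$, so $t=n-\delta$ needs separate treatment --- and in fact at $t=n-\delta$ the strict inequality in the conjecture as written fails (take $G$ to be $K_{\delta,n-\delta}$ plus one edge inside the $\delta$-side; both $G$ and $K_{\delta,n-\delta}$ have exactly one independent set of size $n-\delta$), which suggests the intended upper range is $t \leq n-\delta-1$.

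The genuine gap is the second case, which you identify yourself. Your diagnosis of why it is hard is accurate: the Alekseev-type bound $i_t(G) \leq \binom{\alpha}{t}(n/\alpha)^t$ gives roughly $n^t/t!$ for fixed $t$, which is asymptotically the same order as $\binom{n-\delta}{t}$, so pushing $\alpha(G)$ down even to $n\delta/(\delta+1)$ does not by itself force $i_t(G) < i_t(K_{\delta,n-\delta})$. In the $\delta=1$ proof, the no-removable-vertex case collapses to a single rigid structure (a perfect matching, $n/2$ disjoint edges) that can be counted directly; for $\delta \geq 2$ the class of graphs in which every vertex has a neighbor of degree exactly $\delta$ is far richer, and you would need either a coefficient-level strengthening of the independence-number bound that genuinely exploits minimum degree, or a much finer structural classification of these graphs, to close the case. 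Your observations that the degree-$\delta$ vertices form a total dominating set and that $\alpha(G) \leq n\delta/(\delta+1)$ are natural starting points, but as you say, new work is required here, and the conjecture remains open.
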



\section{Proof of Theorem \ref{thm-randomunimodality}} \label{sec-proof}

Let $G$ be a bipartite graph on bipartition classes ${\mathcal E}$ and ${\mathcal O}$ with $|{\mathcal E}|=|{\mathcal O}|=n$. A trivial lower bound on $i_t(G)$ (for $t \geq 1$) is
\begin{equation} \label{general-lb}
i_t(G) \geq 2{n \choose t},
\end{equation}
obtained by considering those independent sets of size $t$ that come either entirely from ${\mathcal E}$ or entirely from ${\mathcal O}$.

To obtain a similar-looking upper bound, we set up some notation. Let $K(G)$ be the smallest integer with the property that for each $A \subseteq {\mathcal E}$ and $B \subseteq {\mathcal O}$ with $|A|=|B|=K(G)$, there is an edge in $G$ joining $A$ and $B$. For each $1 \leq k \leq K(G)$, let $m(k,G)$ be the maximum, over all $A \subseteq {\mathcal E}$ (or ${\mathcal O}$) with $|A|=k$, of the number of vertices in ${\mathcal O}$ (or ${\mathcal E}$) that are not adjacent to $A$.

By definition of $K(G)$, for each independent set $I$ in $G$ we have
$$
\min\{|I \cap {\mathcal E}|, |I \cap {\mathcal O}|\} \leq \min\{K(G),|I|/2\}.
$$
By definition of $m(k,G)$, the number of independent sets of size $t$ with $\min\{|I \cap {\mathcal E}|, |I \cap {\mathcal O}|\} = k$ for some $1 \leq k \leq \min\{K(G),t/2\}$ is at most ${n \choose k}{m(k,G) \choose t-k}$. This leads to the following upper bound on $i_t(G)$ for all $t \geq 0$:
\begin{eqnarray}
i_t(G) & \leq & 2\left({n \choose t} + \sum_{k=1}^{\min\{K(G),t/2\}} {n \choose k}{m(k,G) \choose t-k}\right) \nonumber \\
& \leq & 2(1+x(t)){n \choose t} \label{general-ub}
\end{eqnarray}
where
\begin{eqnarray}
x(t) & = & \sum_{k=1}^{\min\{K(G),t/2\}} \frac{{n \choose k}{m(k,G) \choose t-k}}{{n \choose t}}. \nonumber \\
& = & \sum_{k=1}^{\min\{K(G),t/2\}} {t \choose k} \frac{(m(k,G))_{(t-k)}}{(n-k)_{(t-k)}} \nonumber \\
& \leq & \sum_{k=1}^{\min\{K(G),t/2\}} {t \choose k} \left(\frac{m(k,G)}{n-k}\right)^{t-k} \label{spec-xt-bound.1}
\end{eqnarray}
where $a_{(b)}=a(a-1)\ldots(a-b+1)$ is a falling power, and the final inequality is valid as long as $m(k,G) \leq n-k$ for all $k$ under consideration, which will be the case in our application.

An easy calculation gives that if
\begin{equation} \label{xt-bound-uni}
x(t) \leq \left\{
\begin{array}{ll}
\frac{n-2t-1}{t+1} & \mbox{for $1 \leq t \leq n/2-1$} \\
& \\
\frac{2t-1-n}{n-t+1} & \mbox{for $n/2+1 \leq t \leq n$}
\end{array}
\right.
\end{equation}
then we have
$$
i_t(G) \leq 2(1+x(t)){n \choose t} \leq 2{n \choose t+1} \leq i_{t+1}(G)
$$
for $1 \leq t \leq n/2-1$ and
$$
i_t(G) \leq 2(1+x(t)){n \choose t} \leq 2{n \choose t-1} \leq i_{t-1}(G)
$$
for $n/2+1 \leq t \leq n$. So to show that the sequence $(i_t(G))_{t=0}^n$ is unimodal with mode $n/2$, it is sufficient to establish (\ref{xt-bound-uni}) (trivially $1=i_0(G) \leq i_1(G)=2n$ as long as $n \geq 1$).

Note that the first expression in (\ref{xt-bound-uni}) above is decreasing in $t$ and its minimum in the range $1 \leq t \leq n/2-1$ is at least $1/n$ (as long as $n \geq 2$); similarly, the second term is increasing in $t$ and its minimum in the range $n/2+1 \leq t \leq n$ is also at least $1/n$.

Similarly, for $1 \leq t \leq n-1$ the condition
\begin{equation} \label{xt-bound-logcon}
\left(1 + \frac{1}{t}\right)\left(1 + \frac{1}{n-t}\right) \geq (1+x(t-1))(1+x(t+1))
\end{equation}
is sufficient to establish (via (\ref{general-lb}) and (\ref{general-ub})) the log-concavity condition $i_t(G)^2 \geq i_{t-1}(G)i_{t+1}(G)$.

\medskip

We now turn our attention to $G^p_n$. Set $d=np$. We begin by establishing some almost-sure properties of $G^p_n$.

We begin with the range $d=\omega(1)$.
First, suppose $A \subseteq {\mathcal E}$ and $B \subseteq {\mathcal O}$ satisfy $|A| = |B| = (2n\log d)/d$. Then almost surely there is an edge from $A$ to $B$. Indeed, the probability that there exists a pair $A$, $B$ of the given sizes that fail to satisfy the property is at most
\begin{eqnarray*}
{n \choose (2n \log d)/d}^2 (1-p)^{(4n^2 \log^2d)/d^2} & \leq & \left(\left(\frac{ed}{2\log d}\right)^2 \exp\{-2\log d\}\right)^{(2n\log d)/d}  \\
& = & \left(\frac{e^2}{4\log^2d}\right)^{(2n\log d)/d} = o(1).
\end{eqnarray*}
Next, for each $1 \leq k \leq (2n\log d)/d$, the number of vertices of ${\mathcal E}$ not neighboring a particular subset of ${\mathcal O}$ of size $k$ is a binomial random variable with parameters $n$ and $(1-p)^k$. By standard Chernoff-type estimates (see for example \cite[Appendix A]{AlonSpencer}), it follows that the probability that more than $n(1-p)^k + t$ vertices are not covered is at most $\exp\{-t^2/(3n(1-p)^k)\}$, and so the probability that there is at least one choice of subset of ${\mathcal O}$ of size $k$ that leaves more than $n(1-p)^k + t$ vertices of ${\mathcal E}$ not covered is at most
$$
{n \choose k} e^{-\frac{t^2}{3n(1-p)^k}} \leq \exp\left\{k \log n -\frac{t^2}{3n(1-p)^k}\right\}.
$$
Choosing
$$
t = 3\sqrt{kn(1-p)^k\log n},
$$
this probability is at most $1/n^2$. It follows that almost surely, $G^p_n$ has the following properties for $d=\omega(1)$:
\begin{itemize}
\item $K(G^p_n) \leq (2n\log d)/d$.
\item $m(k,G^p_n) \leq n(1-p)^k + 3\sqrt{kn(1-p)^k\log n}$ for each $1 \leq k \leq K(G^p_n)$.
\end{itemize}
Next, in the range $p=(\log n + \log \log n + c(n))$ for any $c(n)=\omega(1)$, Frieze \cite{Frieze} has shown that $G^p_n$ almost surely has a Hamilton cycle (and so also a perfect matching). Two immediate consequences of this are as follows.
\begin{itemize}
\item $G^p_n$ is connected and so has largest independent set size $n$.
\item For each $k$, $m(k,G^p_n) \leq n-k$.
\end{itemize}
We will assume from now on that $G^p_n$ has all four of these bulleted properties. Also, since we are concerned with limiting values of quantities as $n$ grows, we will assume throughout that $n$ is large enough to support our assertions.

Our first trivial observation is that for $p=(\log n + \log \log n + \omega(1))/n$ we have
$$
x(t) \leq \sum_{k=1}^{\min\{K(G^p_n),t/2\}} {t \choose k} \leq 2^t.
$$
For $t \leq \log n - 2\log \log n$ this is at most $n/\log^2 n$, and so we have (\ref{xt-bound-uni}) for this range of $t$ and $p$.

Next we show that if $\delta$ is a (small) constant and $C'$ a large constant, then for $p \geq \delta$ and $t \leq C'\log n$, the right-hand side of (\ref{spec-xt-bound.1}) is smaller than $\sqrt{n}$. Indeed, for any constant $\delta'>0$ there is $k(\delta')>0$ such that for $k>k(\delta')$ we have
$$
\frac{m(k,G^p_n)}{n-k} \leq \frac{n(1-p)^k + 3\sqrt{kn(1-p)^k\log n}}{n-(2n\log d)/d} \leq \delta'
$$
since for $p > \delta$ we have
$$
3\sqrt{kn(1-p)^k\log n} \leq C(\delta)\sqrt{n} \log n
$$
and
$$
\frac{2n\log d}{d} \leq C(\delta)\log n.
$$
It follows that
\begin{eqnarray*}
x(t) & \leq & \sum_{k=1}^{k(\delta')} t^k + \sum_{k=k(\delta')}^{\min{K(G^p_n),t/2}} {t \choose k} (\delta')^{t-k} \\
& \leq & k(\delta')t^{k(\delta')} + (\delta')^t\left(1+\frac{1}{\delta'}\right)^t \\
& \leq & k(\delta')(C'\log n)^{k(\delta')} + (1+\delta')^{C'\log n}.
\end{eqnarray*}
For suitably small $\delta'$, this is at most $\sqrt{n}$.

For the remainder, we parameterize by $p=f(n)/n$, so $d=f(n)$. Note that we are always assuming $f(n)\geq \log n + \log\log n + \omega(1)$, and that $f(n)\leq n$. For all $k$ under consideration, that is, for $1 \leq k \leq \min\{(2n\log d)/d,t/2\}$, we have
\begin{eqnarray*}
{t \choose k} \left(\frac{m(k,G^p_n)}{n-k}\right)^{t-k} & \leq & t^k \left(\frac{n(1-p)^k + 3\sqrt{kn(1-p)^k\log n}}{n-\frac{2n\log f(n)}{f(n)}}\right)^{t/2} \\
& = & \left(t(1-p)^{t/4}\right)^k\left(\frac{(1-p)^{k/2}+3\sqrt{(k\log n)/n}}{1-\frac{2\log f(n)}{f(n)}}\right)^{t/2}.
\end{eqnarray*}
Noting that $t \leq n$, we take $t \geq (20\log n)/p$ to get
$$
\left(t(1-p)^{t/4}\right)^k \leq \frac{1}{n^4}.
$$
If we take $p \geq Dn^{-1/2} \log^{1/2}n$ for a suitably large constant $D$ then an easy calculation gives
$$
\frac{(1-p)^{k/2}+3\sqrt{(k\log n)/n}}{1-\frac{2\log f(n)}{f(n)}} \leq 1
$$
for all $k$. It follows that for $p$ in this range, we have $x(t) \leq 1/n^3$ (note that there are at most $n$ summands in (\ref{spec-xt-bound.1})).

In summary, we have
$$
x(t) \leq \left\{
\begin{array}{ll}
\frac{n}{\log^2 n} & \mbox{for $p\geq \frac{\log n + \log\log n + \omega(1)}{n}$ and $t \leq \log n -2\log \log n$} \\
\sqrt{n} & \mbox{for $p\geq \delta$ and $t \leq C'\log n$, for any $\delta, C' >0$} \\
\frac{1}{n^3} & \mbox{for $p\geq Dn^{-1/2}\log^{1/2}n$ and $t \geq \frac{20\log n}{p}$, for large $D>0$}.
\end{array}
\right.
$$
These bounds, together with (\ref{xt-bound-uni}) and (\ref{xt-bound-logcon}), give all parts of Theorem \ref{thm-randomunimodality}.

\section{Proof of Theorem \ref{thm-main-weighted}} \label{Section-proofmain}

We now turn to Theorem \ref{thm-main-weighted}. Recall that the graphs $G$ that we deal with in this section all have $n$ vertices and minimum degree at least $\delta$, and our aim is to maximize the number of independent sets in $G$ subject to these conditions.

Let $I$ be the vertex set of an independent set of maximum size in $G$, and let $J$ be the set of vertices not in $I$. If $|I| \geq n-\delta$ then since $|J| \leq \delta$ and vertices from $I$ can only be adjacent to vertices from $J$, in order to satisfy the condition that $G$ has minimum degree at least $\delta$ we must have $|I|=n-\delta$ and every vertex in $I$ adjacent to every vertex in $J$. We then have
$$
P(G,x) = (1+x)^{n-\delta} + P(G[J],x) - 1
$$
where $G[J]$ is the subgraph of $G$ induced by $J$. This quantity is maximized uniquely when $G[J]$ has no edges, that is when $G=K_{\delta, n-\delta}$.

From now on we assume that $|I| = n -k \leq n -(\delta +1)$, and we will show that in this case $P(G,x) \leq (1+x)^{n-\delta} < P(K_{\delta,n-\delta},x)$. (Note that $P(K_{\delta,n-\delta},x)=(1+x)^{n-\delta} + (1+x)^\delta -1$.) By (\ref{Alekseev-bd}) we have
$$
P(G,x) \leq (1+x)^{n-k} e^{\frac{kx}{1+x}} \leq (1+x)^{n-\delta}
$$
with the second inequality above valid as long as $k \geq C_x\delta$. So we may further assume that $\delta + 1 \leq k < C_x\delta$. (Note that the right-hand side of (\ref{cond}) may be rewritten as
$$
C_x\delta + (\delta - D_x)((C_x-1)\delta +1) > C_x \delta,
$$
so for the range of $n$ we are considering, all $k$ in the range $[\delta + 1, C_x\delta)$ make sense.)

Our strategy now is to show that there are many vertices in $J$ that have large degree to $I$. This limits the contribution to $P(G,x)$ from independent sets whose intersection with $J$ is too large.

Since $G[I]$ is empty (has no edges), the number of edges from $I$ to $J$ (and so also from $J$ to $I$) is at least $\delta(n - k)$. Let $J=\{v_1, \ldots, v_k\}$ and let $a_i$ denote the number of edges from $v_i$ to $I$ for each $i$. Without loss of generality we may assume that $a_1 \geq a_2 \geq \ldots \geq a_k$. We have
$$
\delta(n - k) \leq \sum_{i=1}^k a_i \leq (k-\delta+1)a_{\delta} + (\delta-1)(n-k)
$$
since each $a_i \leq n-k$, and so
\begin{equation} \label{knockout}
a_\delta \geq \frac{n-k}{k-\delta+1}.
\end{equation}
Set $J' = \{v_1, \ldots, v_\delta\}$. We will upper bound $P(G,x)$ by first considering those independent sets which have empty intersection with $J'$, and then those which have non-empty intersection.

Each $S \subseteq J\setminus J'$ with $G[S]$ empty has $|N_I(S)| \geq |S|$, where $N_I(S)$ is the set of vertices in $I$ adjacent to something in $S$ (since otherwise $(I \cup S) \setminus N_I(S)$ would be a larger independent set in $G$ than $I$). We therefore have
\begin{eqnarray}
P(G[I \cup (J\setminus J')],x) & = & \sum_{S \subseteq J\setminus J'\,:\,\, G[S]~\mbox{empty}} x^{|S|}(1+x)^{n-k-|N_I(S)|} \nonumber \\
& \leq & (1+x)^{n-k} \sum_{S \subseteq J\setminus J'} \left(\frac{x}{1+x}\right)^{|S|} \nonumber \\
& = & (1+x)^{n-k} \left(\frac{1+2x}{1+x}\right)^{k-\delta} \nonumber \\
& \leq & (1+x)^{n-\delta} \left(\frac{1+2x}{(1+x)^2}\right)\label{case1}.
\end{eqnarray}
It remains to consider $P(G,x) - P(G[I \cup (J\setminus J')],x)$. By (\ref{knockout}), each non-empty subset $S$ of $J'$ that induces an empty graph has $|N_I(S)| \geq (n-k)/(k-\delta+1)$. It follows that
\begin{eqnarray}
P(G,x) - P(G[I \cup (J\setminus J')],x) & \leq & (1+x)^\delta (1+x)^{n-\delta - \frac{n-k}{k-\delta+1}} \nonumber \\
& \leq & (1+x)^{n-\delta} (1+x)^{\delta - \frac{n-C_x\delta}{C_x\delta-\delta+1}}. \label{case2}
\end{eqnarray}
Combining (\ref{case1}) and (\ref{case2}) we find that
$$
P(G,x) \leq (1+x)^{n-\delta} \left(\frac{1+2x}{(1+x)^2} + (1+x)^{\delta - \frac{n-C_x\delta}{C_x\delta-\delta+1}}\right) \leq (1+x)^{n-\delta},
$$
the second inequality valid by our hypothesis relating $n, x$ and $\delta$.


\begin{thebibliography}{99}

\bibitem{AlaviErdosMaldeSchwenk}
Y. Alavi, P. Erd\H{o}s, P. Malde and A. Schwenk, The vertex independence sequence
of a graph is not constrained, {\em Congressus Numerantium} {\bf 58} (1987), 15--23.

\bibitem{Alekseev}
V. Alekseev, The Number of Maximal Independent Sets in Graphs from Hereditary Classes, in
Combinatorial-Algebraic Methods in Discrete Optimization, (Izd-vo Nizhegorodskogo Un-ta, Nizhnii Novgorod, 1991), 5–-8.

\bibitem{AlonKrivelevich}
N. Alon and M. Krivelevich, The Choice Number of Random Bipartite Graphs, {\em Annals of Combinatorics} {\bf  2} (1998), 291--297.

\bibitem{AlonSpencer} N. Alon and J. Spencer, {\em The
Probabilistic Method}, Wiley, New York, 2000.

\bibitem{ChudnovskySeymour}
M. Chudnovsky and P. Seymour,
The Roots of The Stable Set Polynomial of a Claw-free Graph, {\em J. Combin. Theory. Ser. B} {\bf 97} (2007), 350--357.

\bibitem{CutlerRadcliffe}
J. Cutler and A. Radcliffe,
Extremal graphs for homomorphisms, {\em J. Graph Theory}, to appear.

\bibitem{ErdosRenyi2}
P. Erd\H{o}s and A. R\'enyi, On random matrices, {\em Publ. Math. Inst. Hungar. Acad. Sci.} {\bf 8} (1964), 455--461.

\bibitem{ErdosRubinTaylor}
P. Erd\H{o}s, A. Rubin and H. Taylor, Choosability in graphs, {\em Congressus Numerantium XXVI} (1979),
125-–157.

\bibitem{Frieze}
A. Frieze,
Limit distribution for the existence of Hamilton cycles in random bipartite graphs, {\em Europ. J. Combinatorics} {\bf 6} (1985), 327--334.

\bibitem{Galvin-Qdfixed}
D. Galvin, The independent set sequence of regular bipartite graphs, submitted.

\bibitem{Galvin-asymptot-ind-count}
D. Galvin, An upper bound for the number of independent sets in regular graphs, {\em Discrete Math.} {\bf 309} (2009), 6635--6640.

\bibitem{GutmanHarary}
I. Gutman and F. Harary, Generalizations of the matching polynomial, {\em Utilitas Mathematica}
{\bf 24} (1983), 97--106.

\bibitem{Hamidoune}
Y. Hamidoune, On the number of independent $k$-sets in a claw-free graph, {J. Combin. Theory B} {\bf 50} (1990), 241--244.

\bibitem{HeilmannLieb}
O.~Heilmann and E. Lieb,
Theory of monomer-dimer systems,
{\em Comm. Math. Phys.} {\bf 25} (1972), 190-–232.

\bibitem{Hua}
H. Hua. A sharp upper bound for the number of stable sets in graphs with given
number of cut edges, {\em Applied Mathematics Letters} {\bf 22} (2009), 1380--1385.

\bibitem{Kahn}
J. Kahn, An Entropy Approach to the Hard-Core Model on Bipartite Graphs,
{\em Combin. Probab. Comput.} {\bf 10} (2001),
219--237.

\bibitem{LevitMandrescu}
V. Levit and E. Mandrescu, Partial unimodality for independence polynomials of K\"{o}nig-Egerv\'{a}ry graphs,
{\em Congr. Numer.} {\bf 179} (2006), 109--119.

\bibitem{LinLin}
S. Lin and C. Lin, Trees and forests with large and small independent indices,
{\em Chin. J. Math.} {\bf 23} (1995), 199--210.

\bibitem{MerrifieldSimmons}
R. Merrifield and H. Simmons, {\em Topological Methods in Chemistry}, Wiley, New York, 1989.

\bibitem{PedersenVestergaard}
A. Pedersen and P. Vestergaard, Bounds on the number of vertex independent sets in a graph, {\em Taiwanese Journal of Mathematics} {\bf 10} (2006), 1575--1587.

\bibitem{ProdingerTichy}
H. Prodinger and R. Tichy, Fibonacci numbers of graphs, {\em The Fibonacci Quarterly}
{\bf 20} (1982), 16--21.

\bibitem{Wingard}
G. Wingard, Properties and applications of the Fibonacci polynomial of a graph, Ph.D. thesis, University of Mississippi, May 1995.

\bibitem{Zhao}
Y. Zhao, The Number of Independent Sets in a Regular Graph, {\em
Combin. Probab. Comput.} {\bf 19} (2010), 315--320.

\end{thebibliography}
\end{document}